\newtheorem{theorem}{Theorem}
\newtheorem{definition}{Definition}
\newtheorem{proposition}{Proposition}
\newtheorem{remark}{Remark}
\newtheorem{example}{Example}
\newcommand{\bpi}{{\bm{\pi}}}
\newcommand{\bg}{{\bm{g}}}
\newcommand{\bh}{{\bm{h}}}
\newcommand{\bw}{{\bm{w}}}
\newcommand{\bx}{{\bm{x}}}
\newcommand{\by}{{\bm{y}}}
\newcommand{\bz}{{\bm{z}}}
\newcommand{\bbf}{{\bm{f}}}
\newcommand{\bv}{{\bm{v}}}
\newcommand{\bu}{{\bm{u}}}
\newcommand{\bgamma}{{\bm{\Gamma}}}
\newcommand{\btau}{{\bm{\tau}}}
\newcommand{\bdelta}{{\bm{\delta}}}
\newcommand{\bA}{{\bm{A}}}
\newcommand{\bB}{{\bm{B}}}
\newcommand{\bC}{{\bm{C}}}
\newcommand{\bD}{{\bm{D}}}
\newcommand{\bG}{{\bm{G}}}
\newcommand{\sopt}{\sigma^{{\rm{opt}}}}
\newcommand{\vopt}{\bv^{{\rm{opt}}}}
\newcommand{\ropt}{\rho^{{\rm{opt}}}}
\newcommand{\uopt}{\bu^{{\rm{opt}}}}
\newcommand{\differential}{{\rm{d}}}
\newcommand{\spt}{{\mathrm{spt}}}
\title{\LARGE\textbf{
Finite Horizon Density Steering for Multi-input\\
State Feedback Linearizable Systems}
}
\author{Kenneth F. Caluya, Abhishek Halder
\thanks{Kenneth F. Caluya, and Abhishek Halder are with the Department of Applied Mathematics, University of California, Santa Cruz, CA 95064, USA,
        {\tt\small{\{kcaluya,ahalder\}@ucsc.edu}}%
}\\
\thanks{This research was partially supported by NSF award 1923278.}}
\begin{document}

\maketitle
\thispagestyle{empty}
\pagestyle{empty}

\def\spacingset#1{\def\baselinestretch{#1}\small\normalsize}
\setlength{\parindent}{20pt}
\setlength{\parskip}{12pt}
\spacingset{1}

\title{\huge{Multi-Input Feedback Linearization}}

\author{Kenneth F. Caluya, Abhishek Halder}

\markboth{\today}{}

\maketitle

\begin{abstract}
In this paper, we study the feedback synthesis problem for steering the joint state density or ensemble subject to multi-input state feedback linearizable dynamics. This problem is of interest to many practical applications including that of dynamically shaping a robotic swarm. Our results here show that it is possible to exploit the structural nonlinearities to derive the feedback controllers steering the joint density from a prescribed shape to another while minimizing the expected control effort to do so. The developments herein build on our previous work, and extend the theory of the Schr\"{o}dinger bridge problem subject to feedback linearizable dynamics. 
\end{abstract}


\section{Introduction}
We consider the problem of steering the statistics of the state vector $\bx(t)$ from a prescribed ensemble or joint density $\rho_{0}(\bx)$ to another $\rho_{1}(\bx)$ over a finite time horizon $t\in[0,1]$, subject to controlled nonlinear dynamics of the form 
\begin{equation} \label{AffNonLine}
\begin{aligned}
    \dot{\bx} = \bbf(\bx) + \bm{G}(\bx)\bu, \quad \bx\in \mathcal{X}\subseteq \mathbb{R}^n, \quad \bu \in \mathbb{R}^m,
    \end{aligned}
\end{equation}
where $\bbf$ is a smooth vector field on the state space $\mathcal{X}\subseteq \mathbb{R}^n$, and $\bm{G}$ is an $n\times m$ matrix whose columns consist of the vectors $\bg_{i} \in \mathbb{R}^{n}$ for $i=1,\dots,m$, i.e.,
\begin{equation}
    \bm{G}(\bx) = \left[ \bg_1(\bx)|\bg_2(\bx)|\dots|\bg_m
    (\bx)\right].
\end{equation}
It is of broad practical interest to solve this finite horizon density steering problem while minimizing the average total control effort over the \emph{controlled} state ensemble $\rho(\bx,t)$. 

This problem is motivated by the growing need across science and engineering applications to control a \emph{large population of systems}. Consider for example, shaping the bulk magnetization distribution for Nuclear Magnetic Resonance spectroscopy, controlling heterogeneous (e.g., aerial and ground) robotic swarms \cite{bandyopadhyay2017probabilistic,deshmukh2018mean}, strategically synchronizing and desynchronizing a neuronal population to regulate the Parkinsonian tremor \cite{monga2018synchronizing}, and differentially moving the setpoints of a large population of residential air-conditioners by a service provider to make their total energy consumption track the intermittency in supply (e.g., due to stochastic renewable generation) in a privacy-preserving manner \cite{chertkov2017ensemble,halder2016architecture}. These exemplars concern \emph{population ensemble or density} whose shape is actively controlled over time while preserving the physical mass. The conservation of mass allows an alternative interpretation of the underlying mathematical problem -- instead of steering a large number of dynamical systems, one can think of steering a single system with probabilistic uncertainty in its initial and terminal state, i.e., $\rho_{0},\rho_{1}$ being joint state probability density functions (PDFs). This too, arises naturally in practice, e.g., in robot motion planning \cite{okamoto2019optimal}, where uncertainties in the initial and terminal states are unavoidable due to process and sensor noise.

From a system-control theoretic viewpoint, finite horizon density steering \emph{via feedback} is a non-classical stochastic optimal control problem. The qualifier ``non-classical" points to the fact that finding the feedback policy requires solving an infinite dimensional two-point boundary value problem on the manifold of joint state PDFs. This is an emerging research direction in the systems-control community wherein recent advances \cite{chen2015optimal1,chen2015optimal2,chen2017optimal,chen2016relation,halder2016finite} have uncovered its connections with the theory of optimal mass transport \cite{villani2003topics,villani2008optimal} and the Schr\"{o}dinger Bridge Problem (SBP) \cite{schrodinger1931umkehrung,schrodinger1932theorie,leonard2014survey}. Also, there have been results on the covariance steering problem \cite{hotz1987covariance,skelton1989covariance,zhu1995covariance,skelton1997unified,okamoto2018optimal} which concerns steering second order state statistics. With the exception of \cite{agrachev2009optimal,elamvazhuthi2018optimal}, almost all works have focused on steering the state statistics over a controlled linear system. 

In this paper, we consider finite horizon density steering state feedback linearizable systems of the form (\ref{AffNonLine}). The nonlinearities in (\ref{AffNonLine}) induce non-Gaussian statistics even if the endpoint PDFs are both jointly Gaussian. Thus, finding the feedback solution of the density steering problem in a non-parametric sense, is non-trivial. The main contribution of this paper is to show that it is possible to exploit the feedback linearizing transformation for density steering. In particular, we obtain the optimal state feedback policy in terms of the solution of certain Hamilton-Jacobi-Bellman (HJB) partial differential equation (PDE). Furthermore, we show that a dynamic stochastic regularization can be used to derive a system of boundary-coupled linear PDEs, which we refer to as the \emph{Schr\"{o}dinger system}, whose solutions recover the optimal state feedback and the optimal controlled joint state PDF. We envision that the theoretical developments herein will help design algorithms solving the feedback density steering over nonlinear dynamical systems.

\subsection*{Notations and Preliminaries}
Throughout the paper, we will use bold-faced
capital letters for matrices and bold-faced lower-case letters
for column vectors. We use the symbol $\langle \cdot , \cdot \rangle $ to denote the Euclidean inner product. In particular, $\langle \bA, \bB \rangle := {\rm{trace}}(\bA^{\top}\bB)$
denotes Frobenius inner product between matrices $\bA$ and
$\bB$, and $\langle \bm{a}, \bm{b} \rangle  := \bm{a}^{\top}
\bm{b}$ denotes the inner product between
column vectors $\bm{a}$ and $\bm{b}$.
We use $\bm{0}$ and $\bm{1}$ to denote the vector consisting of all zeros, and all ones, respectively, and the symbol $\bm{e}_{k}$ to denote the $k$\textsuperscript{th}  standard basis vector of appropriate dimension. We use $\nabla_{\bx}$, $\nabla_{\bx}\cdot$, and ${\rm{Hess}}(\cdot)$ to respectively denote the Euclidean gradient, divergence and Hessian operators w.r.t. vector $\bx$. The Lie bracket of two vector fields $\bm{\xi}$ and $\bm{\eta}$ at $\bx\in\mathbb{R}^{n}$ is a new vector field $\left[\bm{\xi},\bm{\eta}\right](\bx):=(\nabla_{\bx}\bm{\eta})\bm{\xi}(\bx) - (\nabla_{\bx}\bm{\xi})\bm{\eta}(\bx)$. For $k\in\mathbb{N}$, the $k$-fold Lie bracketing of $\bm{\eta}$ with the same vector field $\bm{\xi}$ is denoted as ${\rm{ad}}_{\bm{\xi}}^{k}\bm{\eta}:=\left[\bm{\xi},{\rm{ad}}_{\bm{\xi}}^{k-1}\bm{\eta}\right]$; by convention ${\rm{ad}}_{\bm{\xi}}^{0}\bm{\eta}=\bm{\eta}$. The Lie derivative of a scalar-valued function $\lambda(\bx)$ w.r.t. the vector field $\bm{\xi}$ evaluated at $\bx$ is $L_{\bm{\xi}}\lambda(\bx):=\langle\nabla_{\bx}\lambda,\bm{\xi}\rangle(\bx)$. For $k\in\mathbb{N}$, the $k$-fold Lie derivative of $h$ w.r.t. the same vector field $\bm{\xi}$ evaluated at $\bx$ is denoted as $L_{\bm{\xi}}^{k}\lambda(\bx):=\langle\nabla_{\bx}L_{\bm{\xi}}^{k-1}\lambda,\bm{\xi}\rangle(\bx)$; by convention $L_{\bm{\xi}}^{0}\lambda(\bx)=\lambda(\bx)$. Given $m$ vectors fields $\bm{\xi}_{1}(\bx), \hdots, \bm{\xi}_{m}(\bx)$ in $\mathbb{R}^{n}$, we say $\mathcal{D}(\bx):={\rm{span}}\{\bm{\xi}_{1}, \hdots, \bm{\xi}_{m}\}(\bx)$ is involutive at $\bx\in\mathbb{R}^{n}$, if for all $\bm{\xi}_{i}(\bx),\bm{\xi}_{j}(\bx)\in\mathcal{D}(\bx)$, we get that the Lie bracket $\left[\bm{\xi}_{i},\bm{\xi}_{j}\right](\bx)\in\mathcal{D}(\bx)$, where $i,j=1,\hdots,m$. The notation $\bx \sim \rho$ denotes that the random vector $\bx$ has joint PDF $\rho$. Furthermore, we denote the pushforward of a PDF by the symbol $\sharp$. We use the symbol $\circ$ to denote function composition, and $\spt(\cdot)$ to denote the support of a function.


\section{MIMO Feedback Linearization}\label{SecMIMOFeedbackLinearization}
We consider multiple-input control system of the form (\ref{AffNonLine}), and recall some well-known results on feedback linearization \cite{isidori1989nonlinear} that will be useful in the sequel.
\begin{definition} (\textbf{Full state static feedback linearization})
System (\ref{AffNonLine}) is said to be \emph{full state static feedback linearizable} around a point $\bx_0 \in \mathcal{X}$ if there exists a smooth feedback of the form $\bu = \bdelta(\bx) + \bgamma(\bx) \bv$ defined on $\mathcal{X}$, and a  diffeomorphism $\btau:\mathcal{X}\mapsto \mathbb{R}^n$ such that the change of variables $\bz := \btau(\bx)$ transforms (\ref{AffNonLine}) into 
\begin{equation} \label{LinearSystem}
\begin{aligned} 
    \dot{\bz} &= \bA \bz + \bB \bv, \quad \bz \in \mathbb{R}^n, \quad \bv \in \mathbb{R}^m,
    \end{aligned}
\end{equation}
wherein the pair $(\bA,\bB)$ is controllable.
\end{definition}
 In other words, (\ref{AffNonLine}) is full state static feedback linearizable if there exists a triple $(\bdelta(\bx),\bgamma(\bx),\btau(\bx))$ such that 
\begin{equation}
\begin{aligned}
    \left(\nabla \btau \left(\bbf(\bx)+\bm{G}(\bx)\bdelta(\bx) \right) \right)_{\bx=\btau^{-1}(\bz)} &= \bA \bz, \\ 
      \left(\nabla \btau \left(\bm{G}(\bx)\bgamma(\bx) \right) \right)_{\bx=\btau^{-1}(\bz)} &= \bB,
    \end{aligned}
\end{equation}
where the pair $(\bA,\bB)$ satisfies
\begin{equation}
    {\rm{rank}}\left[\bB,\bA\bB,\bA^2\bB,\dots\bA^{n-1}\bB\right] = n.
\end{equation} 
\begin{definition}\cite[p.~220]{isidori1989nonlinear} (\textbf{Vector relative degree})
Consider the Multi-Input Multi-Output (MIMO) system:
\begin{equation} \label{MIMOSystem}
\begin{aligned} 
    \dot{\bx} &= \bbf(\bx) + \bG(\bx) \bu, \\
    \by&= \bm{h}(\bx),
    \end{aligned}
    \end{equation}
where $\bx\in\mathcal{X}\subseteq\mathbb{R}^{n}, \bu\in\mathbb{R}^{m}$ as before. Furthermore, $\bm{h}(\bx):=\left(h_1(\bx),h_2(\bx),\dots h_m(\bx) \right) \in \mathbb{R}^{m}$, where $h_j$ are smooth scalar-valued functions for all $j=1\dots,m$. The input-output system (\ref{MIMOSystem}) is said to have \emph{vector relative degree} $\bm{\pi} = (\pi_1,\pi_2,\dots,\pi_m)$ at $\bx_0 \in \mathcal{X}$, if
\begin{equation} \label{RelDegCond1}
L_{\bg_j}L^{k}_{\bbf}h_i(\bx) \equiv 0, \quad 1 \leq i,j\leq m, \quad 1\leq k<\pi_{i}-1,
\end{equation} 
and the $m\times m$ matrix
\begin{equation} \label{RelDegCond2}
\!\!\!\bC(\bx) := \!\!\begin{bmatrix}L_{\bg_1}L^{\pi_1-1}_\bbf h_1(\bx)  & \dots  & L_{\bg_m}L^{\pi_1-1}_\bbf h_1(\bx) \\
\vdots &\ddots   & \vdots \\
L_{\bg_1}L^{\pi_m-1}_\bbf h_m(\bx)  & \dots  & L_{\bg_m}L^{\pi_m-1}_\bbf h_m(\bx) 
\end{bmatrix}
\end{equation}
evaluated at $\bx=\bx_0$, is non-singular. \\
\end{definition}
Here, $\pi_j\in\mathbb{N}$, $j=1,\hdots,m$, is the number of times one has to differentiate the $j$\textsuperscript{th} output $y_j$ w.r.t. $t$ such that \emph{at least} one of the $m$ input components appears explicitly in the expression for $y_j^{(\pi_j)}$. In other words, $\pi_j$ is the number of integrators between the input and the $j$\textsuperscript{th} output.

\begin{remark} \label{Remark1}
It is known \cite[p.~230]{isidori1989nonlinear} that given an $n$-dimensional vector field $\bbf$, and a matrix $\bG(\bx)$ of rank $m$, the system (\ref{AffNonLine}) is full state static feedback linearizable if and only if:\\
(i) there exist functions $h_1(\bx), h_2(\bx),\dots,h_m(\bx)$, such that the input-output system (\ref{MIMOSystem}) has relative degree $\bpi$ at $\bx_0\in \mathcal{X}$,\\
\emph{and}\\ 
(ii) the relative degree $\bpi$ is such that $\pi_1+\pi_2+\dots + \pi _m =n$, where $n$ is the dimension of the state vector $\bx$.
\end{remark}
The output function $\bh(\bx)$ play an  important role in transforming (\ref{AffNonLine}) into a controllable linear system. If we can find $\bh(\cdot)$ satisfying conditions (i)-(ii) in Remark \ref{Remark1}, then we can use the same to construct a state feedback law and a desired change of coordinates. Explicitly, this feedback law can be obtained as 
\begin{equation} \label{FeedBackLaw}
    \bu = \underbrace{-(\bC(\bx))^{-1}\bm{d}(\bx)}_{:=\bdelta(\bx)}+ \underbrace{(\bC(\bx))^{-1}}_{:={\bgamma(\bx)}} \bv,
\end{equation}
where $\bC(\bx)$ is as in (\ref{RelDegCond2}), and 
\begin{equation}\label{Defd}
    \bm{d}(\bx):= \left(L^{\pi_1}_{\bbf}h_1(\bx),\:L^{\pi_2}_{\bbf}h_2(\bx),\:\dots,\:L^{\pi_m}_{\bbf}h_m(\bx)\right)^{\!\top},
\end{equation}
The linearizing coordinates $\bz := \btau (\bx)$ are subdivided as
\begin{equation}\label{subvectransform}
    \bz = \begin{pmatrix}\bz^{1}
    \\ \bz^{2}\\ \vdots \\ \bz^{m}  \end{pmatrix},\quad     \btau (\bx) = \begin{pmatrix}\btau ^{1}(\bx)
    \\ \btau ^{2} (\bx)\\ \vdots \\ \btau ^{m} (\bx) \end{pmatrix},
\end{equation}
where each $\bz^{i},\btau^{i} \in \mathbb{R}^{\pi_{i}}$, $i=1,\hdots,m$,  have  components 
\begin{equation}\label{DefTransform}
    z^{i}_{k} = \tau^{i}_{k}(\bx):=L_{\bbf}^{k-1}h_{i}(\bx), \quad k=1,\hdots,\pi_{i}.
\end{equation}
The feedback law (\ref{FeedBackLaw}) and the change of coordinates (\ref{subvectransform}), together transform (\ref{AffNonLine}) which is in state-control pair $(\bx,\bu)$, into the
Brunovsky canonical form in the state-control pair $(\bz,\bv)$, given by
\begin{equation} \label{LinearizedODE}
    \dot{\bz} = \bA \bz + \bB \bv, \quad \bz\in\mathbb{R}^{n}, \quad \bv\in\mathbb{R}^{m},
\end{equation}
where $\bA,\bB$ are \emph{block diagonal} matrices
\begin{equation*}
    \bA := {\rm{diag}}(\bA_1,\bA_2, \dots,\bA_m), \quad \bB:= {\rm{diag}}(\bm{b}_1,\bm{b}_2,\dots,\bm{b}_m),
\end{equation*} 
wherein for each $i=1,\hdots,m$, we have 
\begin{equation*}
    \bA_i := [\bm{0}|\bm{e}_1|\bm{e}_2|\dots|\bm{e}_{\pi_{i-1}}] \in \mathbb{R}^{{\pi_{i}\times \pi_{i}}}, \quad \bm{b}_i:= \bm{e}_{\pi_i} \in \mathbb{R}^{\pi_{i}}.
\end{equation*}

\begin{remark}
Since static state feedback linearization is equivalent to Remark \ref{Remark1}, hence the matrix (\ref{RelDegCond2}) is invertible at $\bx=\bx_0$. This guarantees that $\bgamma(\bx)$ and $\bdelta(\bx)$ in (\ref{FeedBackLaw}) are well-defined at $\bx_0$.
\end{remark}
As seen above, the existence of the (fictitious) output $\bh(\cdot)$ is a necessary and sufficient condition for full state feedback linearization. The following result allows us to establish the existence of the $\bh(\cdot)$ under suitable conditions on the vector fields $\bbf(\bx),\bg_1(\bx),\dots, \bg_m(\bx)$. Thus, the  conditions for full state feedback linearization can be restated as the following.
\begin{proposition}\label{PropositionCheckFeedbackLinearizable}
\cite[p.~232]{isidori1989nonlinear}
Consider the system (\ref{AffNonLine}) where ${\rm{rank}}(\bG(\bx_0))= m$, and for $i = 0,1,\dots, n-1$, let 
    \[\Delta_{i}(\bx) :=  {\rm{span}} \{ {\rm{ad}}_{\bbf}^{k} \bg_{k}: 0 \leq k \leq i,1\leq j \leq m\}.\]
Then, there exist scalar-valued functions $h_1(\bx),\dots, h_m(\bx)$, defined on $\mathcal{X}$ such that (\ref{MIMOSystem}) has relative degree $\bm{\pi}$ at $\bx_0$, with $\pi_1+ \pi_2 + \dots + \pi_m = n$, iff:\\
(i) $\Delta_i$ has constant dimension near $\bx_0$ for each $i=0,1,\hdots,n-1$,\\
(ii) $\Delta_{n-1}$ has dimension $n$,\\
(iii) $\Delta_i$ is involutive for each $i=0,1,\hdots,n-2$.
\end{proposition}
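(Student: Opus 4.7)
My plan is to split the proof into necessity and sufficiency, with the Frobenius theorem as the main tool in both directions. First I would prove by induction on $k$ the key technical identity
\[
L_{\mathrm{ad}_{\bbf}^{k}\bg_{j}}L_{\bbf}^{p}h \;=\; \sum_{l=0}^{k}(-1)^{l}\binom{k}{l}\,L_{\bbf}^{k-l}\,L_{\bg_{j}}\,L_{\bbf}^{p+l}h,
\]
which is the bridge between the codistribution-side statements $L_{\bg_{j}}L_{\bbf}^{\bullet}h_{i}=0$ (relative degree) and the distribution-side statements $\langle dL_{\bbf}^{\bullet}h_{i},\,\mathrm{ad}_{\bbf}^{\bullet}\bg_{j}\rangle=0$ (annihilation of $\Delta_{i}$).

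\emph{Necessity.} Assuming outputs $h_{1},\dots,h_{m}$ with vector relative degree $\bpi$ summing to $n$, I would use the identity above applied to (\ref{RelDegCond1}) to deduce that $\langle dL_{\bbf}^{p}h_{i},\,\mathrm{ad}_{\bbf}^{k}\bg_{j}\rangle=0$ whenever $p+k\le \pi_{i}-2$. This identifies each $\Delta_{i}$ with the annihilator of the codistribution $\Omega_{i}:=\mathrm{span}\{dL_{\bbf}^{p}h_{q}: 0\le p\le \pi_{q}-i-2,\ 1\le q\le m\}$. Because $\Omega_{i}$ is generated by \emph{exact} one-forms, its annihilator is automatically involutive (the bracket of vector fields annihilating $d\phi$ also annihilates $d\phi$) and of locally constant dimension, giving (i) and (iii). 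Condition (ii) then follows from $\sum_{q}\pi_{q}=n$ together with the nonsingularity of $\bC(\bx_{0})$ in (\ref{RelDegCond2}), which forces the $n$ differentials $dL_{\bbf}^{p}h_{q}$ (for $p\le \pi_{q}-1$) to be independent at $\bx_{0}$, so that $\Delta_{n-1}$ has full rank $n$.

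\emph{Sufficiency.} Under (i)-(iii), I would apply Frobenius iteratively along the nested filtration $\Delta_{0}\subset\Delta_{1}\subset\cdots\subset\Delta_{n-2}$, each layer being involutive of constant rank near $\bx_{0}$. At each level $i$ this produces a family of scalar functions whose differentials span the annihilator of $\Delta_{i}$. Setting $r_{i}:=\dim\Delta_{i}-\dim\Delta_{i-1}$ (with $\Delta_{-1}:=\{0\}$), I would assemble the outputs $h_{1},\dots,h_{m}$ by selecting, across levels, the Frobenius-generated annihilator functions and assigning them target relative degrees according to the increments $r_{i}-r_{i+1}$, which (by a standard argument using involutivity) form a non-increasing sequence summing to $n$. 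Running the Lie-derivative identity in reverse then shows that the chosen $h_{i}$ satisfy (\ref{RelDegCond1}) with $\pi_{1}+\cdots+\pi_{m}=n$; the remaining nonsingularity of $\bC(\bx_{0})$ in (\ref{RelDegCond2}) reduces to a linear-independence check on the selected differentials paired with the ``newly appearing'' input directions $\mathrm{ad}_{\bbf}^{\pi_{q}-1}\bg_{j}$, and is guaranteed by condition (ii).

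The main obstacle I foresee is the inductive bookkeeping in the sufficiency direction: carefully assigning each Frobenius-generated annihilator function to a distinct output component and a target relative degree so that the totals match $n$ and the decoupling matrix $\bC(\bx_{0})$ comes out nonsingular. The involutivity and constant-rank hypotheses make the increments $r_{i}$ well-defined and non-increasing, but combining the annihilator families across levels into exactly $m$ output functions---no more and no fewer---requires a nontrivial selection argument tied to the controllability indices of the resulting Brunovsky form (\ref{LinearizedODE}). The necessity direction, by contrast, is largely a bookkeeping exercise once the key Lie-derivative identity is in hand.
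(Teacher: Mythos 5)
The paper itself offers no proof of this proposition: it is quoted from Isidori (p.~232), and the construction of the $h_i$ is explicitly deferred to that reference. So the only benchmark is the standard textbook argument, which your plan reproduces in outline: the identity $L_{\mathrm{ad}_{\bbf}^{k}\bg_{j}}L_{\bbf}^{p}h=\sum_{l=0}^{k}(-1)^{l}\binom{k}{l}L_{\bbf}^{k-l}L_{\bg_{j}}L_{\bbf}^{p+l}h$ is correct and is exactly the bridge used there, and your necessity direction is sound in outline. One step you gloss over is load-bearing, though: the identity only yields the inclusion $\Delta_{i}\subseteq\Omega_{i}^{\perp}$, and involutivity of the annihilator of exact one-forms transfers to $\Delta_{i}$ only after you establish the equality $\Delta_{i}=\Omega_{i}^{\perp}$. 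That equality requires the dimension count $\dim\Delta_{i}=\sum_{q}\min(\pi_{q},i+1)$, equivalently the independence near $\bx_{0}$ of the $n$ differentials $dL_{\bbf}^{p}h_{q}$, $0\le p\le\pi_{q}-1$ (which you invoke only for (ii) via nonsingularity of $\bC(\bx_0)$); without it, neither the constant-dimension claim (i) nor the involutivity claim (iii) follows from the annihilation relations alone.

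The genuine gap is in sufficiency. You name the right tools (Frobenius along the nested constant-rank involutive filtration, non-increasing increments $r_{i}=\dim\Delta_{i}-\dim\Delta_{i-1}$, with $r_{i}-r_{i+1}$ outputs of relative degree $i+1$), but the step you flag as ``nontrivial bookkeeping''---selecting, level by level, annihilator functions whose differentials are independent of the Lie derivatives of those already chosen, then verifying (\ref{RelDegCond1}), the summation $\pi_{1}+\cdots+\pi_{m}=n$, and the nonsingularity of $\bC(\bx_{0})$ in (\ref{RelDegCond2})---is the actual substance of the theorem and of Isidori's proof. In particular, condition (ii) by itself does not ``guarantee'' nonsingularity of $\bC(\bx_{0})$; that emerges only from the careful selection argument you defer. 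As written, your necessity half is essentially complete modulo the dimension count, while your sufficiency half is a correct strategy statement rather than a proof.
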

Proposition \ref{PropositionCheckFeedbackLinearizable} helps to verify if a given system of the form (\ref{AffNonLine}) is full-state feedback linearizable. For the construction of the functions $h_i$ in Proposition 1, we refer the readers to \cite{isidori1989nonlinear}. 
\begin{example}
Let us consider a system of the form (\ref{AffNonLine}) defined on a neighborhood of $\bx_0=\bm{0}$, given by
\begin{align}
\dot{\bx} &= \underbrace{\begin{pmatrix} x_2+x_2^2\\ x_3-x_1x_4+x_4x_5 \\ x_2x_4+x_1x_5-x_5^2 \\ 
x_5 \\ x_2^2 
\end{pmatrix}}_{\bbf(\bx)} +\underbrace{ \begin{pmatrix} 0 \\ 0 \\ \cos(x_1-x_5) \\ 0  \\ 0
\end{pmatrix}}_{\bg_1(\bx)}u_1 \nonumber\\
& \qquad \qquad + \underbrace{\begin{pmatrix} 1 \\ 0 \\ 1 \\0 \\ 1
\end{pmatrix}}_{\bg_2(\bx)} u_2.
\label{MIMOExample}	
\end{align}
A direct computation verifies that (\ref{MIMOExample}) satisfies the conditions (i)-(iii) of Proposition \ref{PropositionCheckFeedbackLinearizable}, implying the existence of output functions $h_1(\bx),h_2(\bx)$. Following the constructive steps in \cite[p.~232]{isidori1989nonlinear}, these output functions can be obtained as
\begin{equation} \label{OutputsExample}
    y_1 =h_1= x_1 -x_5, y_2 =h_2=x_4. 
\end{equation}
\end{example}
Notice that 
\begin{equation*}
\begin{aligned}
    L_{\bg_1} h_1(\bx) & \equiv L_{\bg_2} h_1(\bx) \equiv L_{\bg_1}L_{\bbf} h_1(\bx) \equiv L_{\bg_2}L_{\bbf} h_1(\bx) \equiv 0,\\ 
    L_{\bg_1} h_2(\bx) &\equiv  L_{\bg_2} h_2(\bx) \equiv 0, 
\end{aligned}
\end{equation*}
and that the matrix
\begin{equation*}
 \begin{aligned}
 \bC(\bx_0) &=
\begin{pmatrix}
     L_{\bg_1}L_{\bbf}^2 h_1(\bx) &   L_{\bg_2}L_{\bbf}^2 h_1(\bx)  \\ 
       L_{\bg_1}L_{\bbf} h_2(\bx) &
         L_{\bg_2}L_{\bbf} h_2(\bx) 
\end{pmatrix} \bigg\vert_{\bx=\bx_0}\\
&= 
\begin{pmatrix}
\cos(x_1-x_5) & 1 \\ 0 & 1
\end{pmatrix}\bigg |_{\bx=\bx_0} =\begin{pmatrix}
1 & 1 \\ 0 & 1
\end{pmatrix},
\end{aligned}
\end{equation*}
is non-singular. Therefore, (\ref{MIMOExample}) with output (\ref{OutputsExample}) has vector relative degree $\bpi =(\pi_1,\pi_2)= (3,2)$ satisfying $\pi_1+\pi_2 = 3+2 =5$, which is indeed the dimension of the state space. From (\ref{DefTransform}), we obtain the change of coordinates
\begin{equation}\label{ExampleChangeofCoordinates}
    \btau(\bx):= \begin{pmatrix} 
h_1(\bx) \\ L_{\bbf}h_1(\bx) \\  L_{\bbf}^2 h_1(\bx) \\ h_2(\bx) \\ L_{\bbf} h_2(\bx) 
    \end{pmatrix}= \begin{pmatrix}
x_1-x_5 \\ x_2 \\ x_3 -x_1x_4+x_4x_5 \\ x_4 \\ x_5
\end{pmatrix}.
\end{equation}
In this case, (\ref{RelDegCond2}) and (\ref{Defd}) yields
\begin{equation*}
\begin{aligned}
    \bm{d}(\bx)& = \begin{pmatrix} 
    L^{3}_{\bbf} h_1(\bx) \\ L^{2}_{\bbf} h _2(\bx)  
    \end{pmatrix} = \begin{pmatrix}
    0 \\ x_2^2
    \end{pmatrix}, \\ 
    (\bC(\bx))^{-1} 
    &= \begin{pmatrix}
    1/\cos(x_1-x_5) &  -1/\cos(x_1-x_5) \\ 0  & 1
    \end{pmatrix},
    \end{aligned}
\end{equation*}
which, following (\ref{FeedBackLaw}), result in the feedback law  
\begin{equation} \label{ExampleFeedbackLaw} 
\begin{aligned}
\bu &= \underbrace{\begin{pmatrix} 
- x_2^2/\cos(x_1-x_5) \\  x_2^2   
\end{pmatrix}}_{\bdelta(\bx)} \\
&+ 
\underbrace{\begin{pmatrix} 
 1/\cos(x_1-x_5)&-1 \cos(x_1-x_5)  \\ 0 & 1  
\end{pmatrix}}_{\bgamma(\bx)} \bv.
\end{aligned}
\end{equation}
Hence, we have constructed a triple $\left(\bdelta(\bx),\bgamma(\bx),\bm{\tau}(\bx)\right)$ given by (\ref{ExampleChangeofCoordinates})-(\ref{ExampleFeedbackLaw}), that transform (\ref{MIMOExample}) into
\begin{equation}\label{ExampleAB}
\dot{\bz} =\underbrace{\begin{pmatrix}
0 &  1 &  0 & 0 & 0 \\
0 &  0 &  1 & 0 & 0 \\
0 &  0&  0 & 0 & 0 \\
0 &  0 &  0 & 0 & 1 \\
0 &  0 &  0 & 0 & 0 \\
\end{pmatrix}}_{\bA} \bz +
\underbrace{\begin{pmatrix}
0 &  0  \\
0 &  0  \\
1 &  0 \\
0 &  0  \\
0 &  1  \\
\end{pmatrix}}_{\bB}\bv.
\end{equation}


\section{Minimum Energy Density Control}\label{SecMinEnergysteering}

\subsection{Stochastic Optimal Control Problem}
Given system (\ref{AffNonLine}), and two prescribed endpoint PDFs $\rho_{0}(\bx),\rho_{1}(\bx)$, we consider the following minimum energy finite horizon stochastic optimal control problem:
\begin{subequations}\label{StochasticOCP}
\begin{align} 
& \qquad\quad\underset{\bu \in \mathcal{U}} {\text{inf}}
& &  \mathbb{E} \left\{\int_{0}^{1} \frac{1}{2} \lVert \bu(x,t)  \rVert_{2}^{2}  \: \differential t \right\}, \label{StochasticOCP1}\\
& \;  \; \; \text{subject to}
& & \dot{\bx} = \bbf(\bx) + \bG(\bx) \bu, \label{StochasticOCP2}\\  
& & & \bx(0) \sim  \rho_{0}(\bx) \quad \bx(1) \sim \rho_{1}(\bx), \label{StochasticOCP3}
\end{align}
\end{subequations}
where the state space is $\mathcal{X}\subseteq \mathbb{R}^n$, $\bu \in \mathbb{R}^m$ and  (\ref{StochasticOCP2}) is \emph{feedback linearizable}. The infimum is taken over the set of admissible controls with finite energy, i.e.,  $\mathcal{U}:=\{\bu:\mathbb{R}^n \times [0,1]\mapsto \mathbb{R}^m |\: \lVert \bu \rVert_{2}^{2} < \infty \}$, and the expectation operator $\mathbb{E}\{\cdot\}$ in (\ref{StochasticOCP1}) is w.r.t. the controlled joint state PDF $\rho(\bx,t)$ satisfying endpoint conditions (\ref{StochasticOCP3}). The objective is to steer the joint PDF $\rho(\bx,t)$ from the given initial PDF $\rho_0$ at $t=0$ to a terminal PDF $\rho_1$ at $t=1$ while minimizing the expected control effort. 

The problem (\ref{StochasticOCP}) can be recast into a ``fluid dynamics" version \cite{benamou2000computational}, which is the following variational problem:
\begin{subequations} \label{FluidDynamicsVersion}
\begin{align} 
& \underset{\rho,\bu} {\inf}
& & \int_{0}^{1}\int_{\mathcal{X}} \frac{1}{2} \lVert \bu(\bx,t) \rVert_{2}^{2}\:\rho(\bx,t)  \: \differential \bx\:\differential t, \label{FluidDynamicsVersion1}\\
& \text{subject to}
& & \frac{\partial \rho }{\partial t} + \nabla_\bx \cdot(\rho (\bbf(
\bx) + \bG(\bx) \bu )) =0, \label{FluidDynamicsVersion2} \\  
& & & \rho(\bx,t=0) = \rho_{0}, \quad \rho(\bx,t=1) = \rho_{1}\label{FluidDynamicsVersion3}. 
\end{align}
\end{subequations}
Here, the infimum is taken over $\mathcal{P}(\mathcal{X}) \times \mathcal{U}$, where $\mathcal{P}(\mathcal{X}) $ denotes the space of all joint PDFs supported on $\mathcal{X}$. We note that (\ref{FluidDynamicsVersion2}) is the Liouville PDE \cite{brockett2007optimal} associated to the dynamical system (\ref{StochasticOCP2}).

\subsection{Reformulation in Feedback Linearized Coordinates}
In our recent work \cite{caluya2019finite}, we considered the problem (\ref{StochasticOCP}) for the single-input case, i.e., the case $\bG(\bx)\equiv [ \bg_1(\bx)]\in\mathbb{R}^{n}$, and the input $u$ is scalar-valued. The main idea in \cite{caluya2019finite} was to recast (\ref{FluidDynamicsVersion}), which is in state-control pair $(\bx,\bu)$, into an equivalent formulation in feedback linearized state-control pair $(\bz,\bv)$. This was made possible by using the diffeomorphism $\btau: \mathcal{X} \mapsto \mathcal{Z}$ to pushforward the endpoint PDFs $\rho_0,\rho_1$ to PDFs $\sigma_0,\sigma_1$ supported on the feedback linearized state space $\mathcal{Z}$. Specifically,
\begin{equation}
    \sigma_i(\bz):= \btau_{\sharp} \rho_i = \dfrac{\rho_i(\btau^{-1}(\bz))}{\lvert {\rm{det}}(\nabla_{\bx}\btau_{\bx=\tau^{-1}(\bz}) \rvert }, \quad i \in \{0,1\},
\end{equation}
and $\mathcal{Z} := \{\bz \in \mathbb{R}^n| \bz = \tau(\bx), \bx \in \mathcal{X} \}$.

Since $\btau$ is a diffeomorphism, 
the PDFs $\{\sigma_i\}_{i=0,1}$ supported on the feedback linearized state space $\mathcal{Z}$, are well defined, i.e., ${\rm{spt}}(\sigma_i) \subseteq \mathcal{Z}$
provided that ${\rm{spt}}(\rho_i)\subseteq \mathcal{X}$.

To generalize the reformulation in \cite[Sec. III.B]{caluya2019finite} for the multi-input case, we proceed by setting
\begin{equation}\label{defCompositeMap}
\bdelta_{\btau } : = \bdelta \circ \btau ^{-1}, \quad \bgamma_{\btau }: = \bgamma \circ \btau ^{-1}, 
\end{equation}
where $\bm{\delta}$ and $\bm{\Gamma}$ are as in (\ref{FeedBackLaw}). Using $\bu(\bz) = \bdelta_{\bm{\tau }}(\bz) + \bgamma_{\btau }(\bz)\bv$, we now transcribe (\ref{FluidDynamicsVersion}) into
\begin{subequations} \label{FluidsVersionFL}
\begin{align} 
& \underset{\sigma,\bv} {\inf}
& & \int_{0}^{1}\int_{\mathcal{Z}} \frac{1}{2} \mathcal{L}(\bz,\bv) \:\sigma(\bz,t)  \: \differential \bz\: \differential t,\label{FluidsVersionFL1} \\
& \text{subject to}
& & \frac{\partial \sigma }{\partial t} + \nabla_{\bz} \cdot( (\bA \bz + \bB\bv )\sigma) =0, \label{FluidsVersionFL2} \\ 
& & & \sigma(\bx,t=0) = \sigma_{0}, \quad \sigma(\bx,t=1) = \sigma_{1},\label{FluidsVersionFL3} 
\end{align}
\end{subequations}
where
\begin{equation} \label{CostFunctional}
    \mathcal{L}(\bz,\bv):= \lVert \bdelta_{\btau }(\bz) + \bgamma_{\btau }(\bz) \bv \rVert_{2}^2.
\end{equation}
The infimum in (\ref{FluidsVersionFL}) is taken over the pair of transformed PDFs and admissible controls $(\sigma,\bv) \in \mathcal{P}(\mathcal{Z}) \times \mathcal{V}$ where $\mathcal{V}:=\{v: \mathcal{Z} \times [0,1] \mapsto \mathbb{R}^m|\:  \lVert v \rVert_2^2 < \infty\} $.

\begin{remark} \label{RemarkBack2Original}
The solution pair $(\rho^{{\rm{opt}}},\bu^{{\rm{opt}}})$ for (\ref{FluidDynamicsVersion}) can be recovered from the optimal solution $(\sigma^{{\rm{opt}}},\bv^{{\rm{opt}}})$ of (\ref{FluidsVersionFL}) via the transformations 
\begin{subequations}
    \begin{align}
    \rho^{{\rm{opt}}}(\bx,t)&= \sigma^{{\rm{opt}}}(\btau(\bx),t) \lvert {\rm{det}} \nabla_{\bx} \btau_{\bx}(\bx) \rvert, \\ 
    \bu^{{\rm{opt}}}(\bx,t)&= \bdelta(\bx) +  \bgamma(\bx)   \bv^{{\rm{opt}}}(\btau^{-1}(\bx),t)\
    \end{align}
\label{OriginalRecovery}    
\end{subequations}
for $\bx \in \mathcal{X}$, and $t\in [0,1]$. 
\end{remark}

\begin{example} 
To illustrate the reformulation (\ref{FluidsVersionFL}), let us reconsider the system (\ref{MIMOExample}). In this case, the inverse mapping of (\ref{ExampleChangeofCoordinates}) is given by
\begin{equation}
    \!\!\bx = \btau^{-1}\!(\bz):= \begin{pmatrix}
    z_1+z_5 & z_2 & z_3 + z_1z_4 & z_4 & z_5
    \end{pmatrix}^{\!\top}.
\end{equation} 
Here, the determinant of the Jacobian of (\ref{ExampleChangeofCoordinates}) is non-zero for all vectors in $\mathbb{R}^{n}$, i.e., $\mathcal{Z}=\mathbb{R}^{n}$.
From (\ref{ExampleFeedbackLaw}) and (\ref{defCompositeMap}), we have 
\begin{equation}\label{defdeltatau}
    \bdelta_{\btau}(\bz) = \begin{pmatrix}
    -z_2^2/\cos(z_1) \\ z_2^2
    \end{pmatrix},
\end{equation}
and 
\begin{equation}\label{defGammatau}
    \bgamma_{\btau}(\bz) = \begin{pmatrix}
    1/\cos(z_1) &  -1/\cos(z_1) \\ 0 & 1
    \end{pmatrix}.
\end{equation}
The functional $\mathcal{L}(\bz,\bv)$ in (\ref{CostFunctional}) equals 
\begin{align}\label{ExampleLagrangian}
    &\bv^{\top}\begin{pmatrix} 2/\cos^2(z_1) &-1/\cos(z_1) \nonumber\\ 
-1/\cos(z_1) & 1 \end{pmatrix} \bv + \langle 
                         \left(z_2^2/\cos^2(z_1), \right.\\
                         &\left.-z_2^2/\cos(z_1) + z_2^2
                       \right)^{\top}, \bv\rangle + z_2^4/\cos^2(z_1) + z_2^4.
\end{align}
\end{example}

\begin{remark} Because feedback linearization guarantees that the matrix pair $(\bA,\bB)$ is controllable, any vector $\bz_1 \in \mathcal{Z}$ is reachable from any other vector $\bz_0 \in \mathcal{Z}$ for all $t\in[0,1]$ via the flow of (\ref{LinearizedODE}). This ensures that in (\ref{FluidsVersionFL3}), the initial PDF $\sigma_0(\bz)$ can be steered to $\sigma_1(\bz)$ via the flow $\sigma(\bz,t)$ of the controlled Liouville PDE (\ref{FluidsVersionFL2}). Thus, the constraint set of (\ref{FluidsVersionFL}) is non-empty, and the problem is feasible.
\end{remark}

\subsection{Optimality} \label{optimality}
To show the existence and uniqueness of minimizer for (\ref{FluidsVersionFL}), we set $\bm{m
}:=\sigma \bv$, and consider the change of variable $(\sigma,\bv) \mapsto (\sigma,\bm{m})$, transforming (\ref{FluidsVersionFL}) into 
\begin{subequations} \label{FluidsVersionFLm}
\begin{align} 
& \underset{\sigma,\bm{m}} {\inf}
& & \int_{0}^{1}\int_{\mathcal{Z}} \mathcal{J}(\sigma,\bm{m} ) \: \differential \bz \: \differential t,\label{FluidsVersionFL1m} \\
& \text{subject to}
& & \frac{\partial \sigma }{\partial t} + \nabla_{\bz} \cdot( \bA \bz \sigma+ \bB\bm{m} ) =0, \label{FluidsVersionFL2m} \\ 
& & & \sigma(\bx,t=0) = \sigma_{0}, \quad \sigma(\bx,t=1) = \sigma_{1},\label{FluidsVersionFL3m} 
\end{align}
\end{subequations}
where 
\begin{equation}
\mathcal{J}(\sigma,\bm{m}):=
\begin{cases}
\frac{1}{2}\lVert \bdelta_{\btau }(\bz) + \bgamma_{\btau }(\bz) \frac{\bm{m}}{\sigma} \rVert_{2}^2\: \sigma 
& \text{if $\sigma >0$,} \\
0 & \text{if ($\sigma,\bm{m})$ = (0,0), } \\
+\infty & \text{otherwise}.
\end{cases}
\end{equation}
We note that $\mathcal{J}(\sigma,\bm{m})$ is the perspective function of the strictly convex map $\bm{m} \mapsto \lVert \bdelta_{\btau }(\bz)\sigma + \bgamma_{\btau }(\bz) \bm{m} \rVert_{2}^2$; therefore, $\mathcal{J}$ is jointly strictly convex in $(\sigma,\bm{m})$. The constraints (\ref{FluidsVersionFL2m})-(\ref{FluidsVersionFL3m}) are linear in $(\sigma,\bm{m})$. Hence, (\ref{FluidsVersionFLm}) admits a unique minimizing pair, and equivalently, so does (\ref{FluidsVersionFL}). The following theorem summarizes how this optimal pair for (\ref{FluidsVersionFL}), denoted hereafter as $(\sopt,\vopt)$, can be obtained.

\begin{theorem} \label{ThmOptimalControl}
(\textbf{Optimal control for (\ref{FluidsVersionFL})})
The optimal control $\vopt$ for the problem (\ref{FluidsVersionFL}), is given by
\begin{equation} \label{optcntrl}
    \bv^{{\rm{opt}}}(\bz,t) = (\bgamma_{\btau }^{\top}\bgamma_{\btau }(\bz))^{-1} \bB^{\!\top}\nabla_{\bz}\psi - \bgamma^{-1}_{\btau }(\bz) \bdelta_{\btau }(\bz),
\end{equation}
\end{theorem}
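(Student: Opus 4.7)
The plan is to view (\ref{FluidsVersionFLm}) as a PDE-constrained optimization and derive the first-order necessary conditions via a costate (Lagrange multiplier) approach. I would introduce a smooth multiplier $\psi(\bz,t)$ for the Liouville constraint (\ref{FluidsVersionFL2}) and form the augmented Lagrangian
\begin{align*}
\mathcal{F}(\sigma,\bv,\psi) &= \int_{0}^{1}\!\!\int_{\mathcal{Z}} \tfrac{1}{2}\lVert\bdelta_{\btau}(\bz)+\bgamma_{\btau}(\bz)\bv\rVert_{2}^{2}\,\sigma\,\differential\bz\,\differential t \\
&\quad + \int_{0}^{1}\!\!\int_{\mathcal{Z}} \psi\!\left(\tfrac{\partial\sigma}{\partial t}+\nabla_{\bz}\!\cdot\!((\bA\bz+\bB\bv)\sigma)\right)\!\differential\bz\,\differential t .
\end{align*}

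First I would integrate by parts to move $\partial_{t}$ and $\nabla_{\bz}\cdot$ off $\sigma$ and onto $\psi$. The temporal endpoint contributions are fixed by (\ref{FluidsVersionFL3}) and therefore drop out of $(\sigma,\bv)$ variations, while the spatial boundary term vanishes under the standard decay/no-flux assumption on admissible flows. What remains is an integrand of the form $\sigma\bigl[\tfrac{1}{2}\lVert\bdelta_{\btau}+\bgamma_{\btau}\bv\rVert_{2}^{2}-\partial_{t}\psi-\langle\nabla_{\bz}\psi,\bA\bz+\bB\bv\rangle\bigr]$. Because $\bv$ enters algebraically (no derivatives), setting the pointwise variation in $\bv$ to zero on the set $\{\sigma>0\}$ produces the stationarity condition
\begin{equation*}
\bgamma_{\btau}^{\top}(\bz)\bigl(\bdelta_{\btau}(\bz)+\bgamma_{\btau}(\bz)\bv\bigr)=\bB^{\top}\nabla_{\bz}\psi .
\end{equation*}

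Solving this linear system for $\bv$ is the concluding step. Feedback linearization guarantees that $\bC(\bx_{0})$ is invertible, so $\bgamma(\bx)=\bC(\bx)^{-1}$ and $\bgamma_{\btau}(\bz)=\bgamma\circ\btau^{-1}(\bz)$ are invertible throughout $\mathcal{Z}$; hence $\bgamma_{\btau}^{\top}\bgamma_{\btau}\succ\bm{0}$. Premultiplying by $(\bgamma_{\btau}^{\top}\bgamma_{\btau})^{-1}$ and using the identity $(\bgamma_{\btau}^{\top}\bgamma_{\btau})^{-1}\bgamma_{\btau}^{\top}=\bgamma_{\btau}^{-1}$ (valid for a square invertible $\bgamma_{\btau}$) then delivers (\ref{optcntrl}). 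Uniqueness of the minimizing pair, and hence of the $\vopt$ so obtained, is inherited from the strict joint convexity of the perspective functional $\mathcal{J}(\sigma,\bm{m})$ established just before the theorem, so this necessary condition is also sufficient.

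The step I expect to require the most care is the rigorous handling of the boundary terms from the integration by parts and the clean identification of $\psi$. The dual variation with respect to $\sigma$ produces an HJB-type equation for $\psi$ coupled to (\ref{FluidsVersionFL2}) through $\vopt$, and well-posedness of that coupled two-point boundary value problem on $\mathcal{P}(\mathcal{Z})\times[0,1]$ is a separate matter. However, since (\ref{optcntrl}) only involves $\nabla_{\bz}\psi$, at the level of Theorem \ref{ThmOptimalControl} it suffices to identify $\psi$ as the costate of the adjoint system rather than solving it explicitly, leaving the analysis of the Schr\"{o}dinger system alluded to in the introduction for subsequent results.
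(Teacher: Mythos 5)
Your proposal is correct and follows essentially the same route as the paper's own proof: form the Lagrangian with multiplier $\psi$ for the continuity constraint, integrate by parts in $t$ and $\bz$ using the decay of $\sigma$ at $\partial\mathcal{Z}$, minimize pointwise in $\bv$ to get $\bgamma_{\btau}^{\top}\bgamma_{\btau}\bv^{\rm{opt}}=\bB^{\top}\nabla_{\bz}\psi-\bgamma_{\btau}^{\top}\bdelta_{\btau}$, and invert using the nonsingularity of $\bgamma_{\btau}$ to obtain (\ref{optcntrl}). Your added remarks on the identity $(\bgamma_{\btau}^{\top}\bgamma_{\btau})^{-1}\bgamma_{\btau}^{\top}=\bgamma_{\btau}^{-1}$ and on sufficiency via the strict convexity of the perspective functional are consistent with what the paper states before the theorem.
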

where $\psi$ solves the Hamilton-Jacobi-Bellman (HJB) PDE 
\begin{align}
    & \frac{\partial \psi}{\partial t}+\langle \nabla_{\bz}\psi, \bA \bz \rangle-\langle \nabla_{\bz}\psi,\bB \bgamma^{-1}_{\btau}(\bz)\bdelta_{\btau}(\bz) \rangle 
       \nonumber\\ 
      &+  \frac{1}{2}\langle\nabla_{\bz}\psi, \bB \left (\bgamma_{\btau}^{\top}(\bz) \bgamma_{\btau}(\bz) \right)^{-1}\bB^{\top}\nabla_{\bz} \psi   \rangle = 0.
      \label{HJBPDE}
\end{align}
Furthermore, if the optimal joint state PDF $\sigma^{{\rm{opt}}}$ is a solution to the Liouville PDE
\begin{align}
        \frac{\partial \sigma^{{\rm{opt}}}}{\partial t} + \nabla_{\bz}\cdot \left( \left(\bA\bz + \bB \bv^{{\rm{opt}}} \right)\right) = 0,
\label{LiouvillePDE}        
\end{align}
with boundary conditions $\sopt(\bz,0) =  \sopt_{0}(\bz)$, and $\sopt(\bz,1) =  \sopt_1(\bz)$, then the pair $(\sopt,\vopt)$ solves the problem (\ref{FluidsVersionFL}).

\begin{proof}
The Lagrangian associated with (\ref{FluidsVersionFL}) is 
\begin{align}
    \mathscr{L}(\sigma,\psi,\bv) &=\int_{0}^{1}  \int_{\mathcal{Z}}\frac{1}{2} \mathcal{L}(\bz,\bv) \sigma(\bz,t) \:\differential \bz  \differential t \nonumber\\
    &+ \underbrace{\int_{\mathcal{Z}} \int_{0}^{1} \psi(\bz,t)  \frac{\partial \sigma}{\partial t} \differential t \: \differential \bz}_{\rm{term \: 1}} \nonumber\\
   &\hspace{-0.2in}+\underbrace{\int_{0}^{1}  \int_{\mathcal{Z}} 
   \psi(\bz,t) \nabla_{\bz} \cdot \left( \left(\bA \bz + \bB \bv  \right)\sigma \right) \: \differential \bz  \differential t}_{\rm{term\: 2}}.
   \label{Lagrangian1}
    \end{align}
In (\ref{Lagrangian1}), we interchange the order of integration and  perform integration by parts w.r.t. $t$ in term 1, and w.r.t. $\bz$ in term 2. Since $\sigma(\bz,t) \rightarrow 0 $ as $z \rightarrow \partial \mathcal{Z}$, we can express $\mathscr{L}$ as 
\begin{equation} \label{Lagrangian2}
    \int_{0}^{1}  \int_{\mathcal{Z}} \left\{ \frac{1}{2}\mathcal{L}(\bz,\bv) - \frac{\partial \psi }{\partial t}- 
    \langle \nabla_{\bz} \psi, \bA \bz + \bB \bv \rangle    \right\} \sigma(\bz,t) \differential \bz \differential t.
\end{equation}
Performing pointwise minimization of the above w.r.t. $\bv$ while fixing $\sigma$, we obtain 
\begin{equation}
    \bgamma_{\btau}^{\top} \bgamma_{\btau}\bv^{\rm{opt}}(\bz,t) = \bB^{\top}\nabla_{z} \psi(\bz,t) -\bgamma_{\btau}^{\top}(\bz) \bdelta_{\btau}(\bz). 
    \end{equation}
Taking the matrix inverse on both sides yield (\ref{optcntrl}). Substituting $\bv^{\rm{opt}}$ back into (\ref{Lagrangian2}) and equating to zero, we then get 
\begin{equation}
\begin{aligned}
      &\int_{0}^{1}  \int_{\mathcal{Z}} \bigg\{
      -\frac{\partial \psi}{\partial t}- \langle \nabla_{\bz}\psi, \bA \bz \rangle+ \langle \nabla_{\bz}\psi,\bB \bgamma^{-1}_{\btau}(\bz)\bdelta_{\btau}(\bz) \rangle 
       \\ 
      &- \frac{1}{2}\langle\nabla_{\bz}\psi, \bB \left (\bgamma_{\btau}^{\top}(\bz) \bgamma_{\btau}(\bz) \right)^{\!-1}\!\!\bB^{\!\top}\nabla_{\bz} \psi   \rangle \bigg\} \sigma(\bz,t)\differential \bz \differential t = 0.
      \end{aligned}
\label{AlmostFinal}      
\end{equation}
Since (\ref{AlmostFinal}) holds for arbitrary $\sigma$, we arrive at (\ref{HJBPDE}). 
\end{proof}

\begin{example} (\textbf{HJB for (\ref{ExampleLagrangian})}) 
From (\ref{defdeltatau})-(\ref{defGammatau}), we have  
\begin{equation}\label{invofouterproduct}
    (\bgamma^{\top}_{\btau}(\bz)\bgamma_{\btau}(\bz))^{-1}     = \begin{pmatrix} \cos^2(z_1) &\cos(z_1) \\ 
\cos(z_1)  &2 \end{pmatrix}
\end{equation}
 and
 \begin{equation}\label{invGammataudeltatau}
     \bgamma_{\btau}^{-1}(\bz)\bdelta_{\btau}(\bz) = \begin{pmatrix}
     0 \\ z_2^2
     \end{pmatrix}.
 \end{equation}
Substituting (\ref{invofouterproduct})-(\ref{invGammataudeltatau}) into (\ref{HJBPDE}), and using the pair $(\bA,\bB)$ from (\ref{ExampleAB}), gives the HJB PDE 
\begin{align}
    &\frac{\partial \psi }{\partial t} 
    +z_2 \frac{\partial \psi}{\partial z_1} +z_3\frac{\partial \psi}{\partial z_2} + z_5\frac{\partial \psi}{\partial z_4} 
     - z_2^2 \frac{\partial \psi}{\partial z_5} \nonumber\\
    &+\frac{1}{2} \bigg[\!\cos^2(z_1)\!\left(\frac{\partial \psi}{\partial z_3}\right)^{\!\!2} \!\!+  \cos(z_1)\frac{\partial \psi}{\partial z_3}\frac{\partial \psi}{\partial z_5} + 2\left(\frac{\partial \psi}{\partial z_5}\right)^{\!\!2} \!\bigg] = 0.
    \label{ExampleHJB}
    \end{align}
\end{example}

\begin{remark}
Computing the pair $(\sigma^{{\rm{opt}}},\bv^{{\rm{opt}}})$ in Theorem \ref{ThmOptimalControl} is challenging in general since it calls for solving a system of coupled nonlinear PDEs (\ref{HJBPDE})-(\ref{LiouvillePDE}) with atypical boundary conditions. In the following Sections, we will provide further reformulations of (\ref{FluidsVersionFL}) to make it computationally amenable. 	
\end{remark}


\section{Stochastic Density Steering: Reformulation into Schr\"{o}dinger System}
\label{SecStochasticSteering}
Motivated by \cite{chen2017optimal}, we consider a generalized version of (\ref{FluidsVersionFL}) by adding a diffusion term to (\ref{FluidsVersionFL2}):
\begin{subequations} \label{FluidsVersionStochasticFL}
\begin{align} 
& \underset{\sigma,\bv} {\inf}
& & \int_{0}^{1}\int_{\mathcal{Z}} \frac{1}{2} \mathcal{L}(\bz,\bv) \sigma(\bz,t)  \: \differential \bz\:\differential t,\label{FluidsVersionStochasticFL1} \\
& \text{subject to}
& & \frac{\partial \sigma }{\partial t} + \nabla_{\bz} \cdot( (\bA \bz + \bB\bv )\sigma)  \nonumber\\
& & & \quad =\epsilon \bm{1}^{\top}\left(\bm{D}(\bz) \odot {\rm{Hess}}(\sigma) \right) \bm{1},\label{FluidsVersionStochasticFL2} \\ 
& & & \sigma(\bz,t=0) = \sigma_{0}, \quad \sigma(\bz,t=1) = \sigma_{1},\label{FluidsVersionStochasticFL3} 
\end{align}
\end{subequations}
where $\bD(\bz):= \bB \bgamma_{\tau} ^{-1}(\bz) (\bB \bgamma_{\tau}^{-1}(\bz))^{\top} $. In particular, the controlled Liouville PDE in (\ref{FluidsVersionFL2}) is now replaced by a Fokker-Planck-Kolmogorov PDE in (\ref{FluidsVersionStochasticFL2}), having an additional diffusion term $\sqrt{2\epsilon}\bB \bgamma_{\tau} ^{-1}(\bz)$, where the parameter $\epsilon>0$ (not necessarily small). Formally, this generalization is equivalent to adding a stochastic perturbation to the controlled sample path ODE $\dot{\bz} = \bA \bz + \bB \bv$, resulting in the It\^{o} SDE
\begin{equation}
    \differential \bz = (\bA \bz + \bB \bv) \: \differential t+ \sqrt{2\epsilon}\bB \bgamma_{\tau} ^{-1}(\bz) \: \differential  \bm{w},
\end{equation}
where $\bm{w}(t)\in\mathbb{R}^{m}$ is standard Wiener process. In the special case $\bdelta_{\tau}(\bz)\equiv 0 $ and $\bgamma_{\tau}(\bz)\equiv \bm{I}$, problem (\ref{FluidsVersionStochasticFL}) reduces to the Schr\"{o}dinger bridge problem with linear prior dynamics \cite[equation (49)]{chen2017optimal}. Thus, (\ref{FluidsVersionStochasticFL}) is a Schr\"{o}dinger bridge-like problem with a prior dynamics that has linear drift and nonlinear diffusion coefficient.

The following Theorem characterizes the minimizing pair $(\sopt,\vopt)$ for problem (\ref{FluidsVersionStochasticFL}).

\begin{theorem} \label{ThmOptimalStochasticControl}
(\textbf{Optimal control for (\ref{FluidsVersionStochasticFL})})
The optimal control $\bv^{{\rm{opt}}}(\bz,t)$ for (\ref{FluidsVersionStochasticFL}) is given by (\ref{optcntrl}), where $\psi$ solves the HJB PDE 
\begin{align}\label{StochasticHJBPDE}
    & \frac{\partial \psi}{\partial t}+\langle \nabla_{\bz}\psi, \bA \bz \rangle-\langle \nabla_{\bz}\psi,\bB \bgamma^{-1}_{\btau}(\bz)\bdelta_{\btau}(\bz) \rangle 
       \nonumber\\ 
      &+  \frac{1}{2}\langle\nabla_{\bz}\psi,  \bm{D}(\bz) \nabla_{\bz} \psi   \rangle  + \epsilon \langle \bm{D}(\bz), {\rm{Hess}}(\psi) \rangle = 0,
   \end{align}
and the optimal joint state PDF $\sigma^{{\rm{opt}}}(\bz,t)$ solves the controlled Fokker-Planck-Kolmogorov PDE 
\begin{align}
 \frac{\partial \sigma^{{\rm{opt}}} }{\partial t} &+ \nabla_{\bz} \cdot( (\bA \bz + \bB\bv^{{\rm{opt}}} )\sigma^{{\rm{opt}}})  \nonumber\\
 &-\epsilon \bm{1}^{\top}\left(\bm{D}(\bz) \odot {\rm{Hess}}(\sigma^{{\rm{opt}}}) \right) \bm{1}  = 0,
 \label{FPKPDE}
\end{align}
with boundary conditions 
\begin{equation}
    \sigma^{{\rm{opt}}}(\bz,t=0) = \sigma^{{\rm{opt}}}_{0}, \quad \sigma^{{\rm{opt}}}(\bz,t=1) = \sigma^{{\rm{opt}}}_{1}.
 \label{SBPBC}   
\end{equation}
\end{theorem}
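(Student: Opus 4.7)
The plan is to mirror the variational argument in the proof of Theorem~\ref{ThmOptimalControl}, with the only new ingredient being a double integration by parts in $\bz$ that extracts the ${\rm{Hess}}(\psi)$ contribution appearing in (\ref{StochasticHJBPDE}). First, I would form the Lagrangian as in (\ref{Lagrangian1}) but with one extra term $-\epsilon\int_{0}^{1}\!\!\int_{\mathcal{Z}}\psi\,\bm{1}^{\!\top}(\bD(\bz)\odot{\rm{Hess}}(\sigma))\bm{1}\,\differential\bz\,\differential t$ to account for the additional diffusion piece in the dynamic constraint (\ref{FluidsVersionStochasticFL2}). The three non-diffusive pieces are handled exactly as in (\ref{Lagrangian2}): one integration by parts in $t$ plus one in $\bz$, with spatial boundary contributions discarded via $\sigma(\bz,t)\to 0$ as $\bz\to\partial\mathcal{Z}$.

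For the new term, I would rewrite the Hadamard expression using the Frobenius pairing $\bm{1}^{\!\top}(\bD\odot{\rm{Hess}}(\sigma))\bm{1}=\langle\bD(\bz),{\rm{Hess}}(\sigma)\rangle$, and integrate by parts twice in $\bz$ to shift both spatial derivatives off $\sigma$ and onto $\psi$. Assuming both $\sigma$ and $\nabla_{\bz}\sigma$ decay at $\partial\mathcal{Z}$, the leading surviving contribution is $\epsilon\,\langle\bD(\bz),{\rm{Hess}}(\psi)\rangle\,\sigma$. This is the step I expect to be the main obstacle: because $\bD(\bz)=\bB\bgamma_{\btau}^{-1}(\bz)(\bB\bgamma_{\btau}^{-1}(\bz))^{\!\top}$ is state-dependent, the two integrations by parts in principle also generate cross terms involving $\nabla_{\bz}\bD$ and $\nabla_{\bz}^{2}\bD$. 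Verifying that the block structure of $\bB$ (columns of the form $\bm{e}_{\pi_{i}}$) together with the dependence pattern of $\bgamma_{\btau}^{-1}(\bz)$ cancels these extra terms, so that only $\langle\bD,{\rm{Hess}}(\psi)\rangle$ survives, is the delicate calculation.

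Once the Lagrangian is recast as a single integral of $\sigma$ times a scalar functional of $\bv$ and the derivatives of $\psi$, pointwise minimization in $\bv$ for fixed $\sigma>0$ proceeds exactly as in Theorem~\ref{ThmOptimalControl}, since the diffusion contribution is $\bv$-independent. The first-order condition $\bgamma_{\btau}^{\!\top}\bgamma_{\btau}(\bz)\,\bv^{{\rm{opt}}}=\bB^{\!\top}\nabla_{\bz}\psi-\bgamma_{\btau}^{\!\top}(\bz)\bdelta_{\btau}(\bz)$ then yields (\ref{optcntrl}) upon inverting $\bgamma_{\btau}^{\!\top}\bgamma_{\btau}(\bz)$. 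Substituting $\bv^{{\rm{opt}}}$ back into the reduced Lagrangian and demanding that the remaining integrand vanish for arbitrary admissible $\sigma$ produces (\ref{StochasticHJBPDE}). Primal feasibility then forces $\sigma^{{\rm{opt}}}$ to solve the controlled Fokker--Planck--Kolmogorov PDE (\ref{FPKPDE}) driven by $\bv^{{\rm{opt}}}$, subject to the endpoint data (\ref{SBPBC}); existence and uniqueness of the minimizing pair is inherited from the strict-convexity argument of Section~\ref{optimality}, which is unaffected by the linear-in-$\sigma$ diffusion term.
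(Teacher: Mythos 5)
Your proposal follows essentially the same route as the paper's proof: augment the Lagrangian (\ref{Lagrangian1}) with the diffusion term (\ref{term3}), move the spatial derivatives from $\sigma$ onto $\psi$ by double integration by parts so that the extra contribution becomes $-\epsilon\langle \bm{D}(\bz),{\rm{Hess}}(\psi)\rangle\,\sigma$ inside (\ref{Lagrangian2}), note that this term is $\bv$-independent so the pointwise minimization and (\ref{optcntrl}) are unchanged, and obtain (\ref{StochasticHJBPDE}) from stationarity in $\sigma$, with (\ref{FPKPDE})--(\ref{SBPBC}) being primal feasibility. The $\nabla_{\bz}\bm{D}$ cross-terms you flag as the delicate step are exactly where the paper's own chain of equalities (\ref{ChainOfEqualitites}) passes without comment (its last step identifies $\sum_{i,j}\partial^{2}(\bm{D}_{ij}\sigma)/\partial z_i\partial z_j$ with $\sum_{i,j}\bm{D}_{ij}\,\partial^{2}\sigma/\partial z_i\partial z_j$), so your sketch is faithful to, and if anything more candid than, the published argument.
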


\begin{proof}
The proof proceeds similarly as in Theorem \ref{ThmOptimalControl} except that we now have an additional term in the Lagrangian (\ref{Lagrangian1}) which we refer to as ``term 3", given by
\begin{equation} \label{term3}
-\underbrace{\epsilon\int_{0}^{1} \int_{\mathcal{Z}} \psi(\bz,t)  \bm{1}^{\top}\left(\bm{D}(\bz) \odot {\rm{Hess}}(\sigma^{{\rm{opt}}}) \right) \bm{1} \differential \bz \differential t.}_{{\rm{term \: 3}}}
\end{equation}
From the following chain of equalities:
\begin{align}  
& \int_{\mathcal{Z}} \langle \bm{D}(\bz), {\rm{Hess}}(\psi)  \rangle \sigma^{{\rm{opt}}}(\bz,t) \differential \bz \nonumber\\
&= \int_{\mathcal{Z}} \sum_{i,j=1}^n \bm{D}_{ij}(\bz) \dfrac{\partial \psi(\bz,t)}{\partial z_i \partial z_j} \sigma^{{\rm{opt}}}(\bz,t)\differential \bz \nonumber\\ 
&= \sum_{i,j=1}^n \int_{\mathcal{Z}} \bm{D}_{ij}(\bz) \dfrac{\partial \psi(\bz,t)}{\partial z_i \partial z_j} \sigma^{{\rm{opt}}}(\bz,t)\differential \bz  \nonumber\\
&= -\sum_{i,j=1}^n \int_{\mathcal{Z}}  \frac{\partial \psi(\bz,t) }{\partial z_j} \frac{\partial (\bm{D}_{ij}\sigma^{{\rm{opt}}}(\bz,t) ) }{\partial z_i} \differential \bz\nonumber\\
&=  \int_{\mathcal{Z}}  \psi(\bz,t) \sum_{i,j=1}^n \dfrac{\partial (\bm{D}(\bz)_{ij}\sigma^{{\rm{opt}}}(\bz,t))}{\partial z_j\partial z_i}  \differential \bz \nonumber\\ 
&= \int_{\mathcal{Z}} \psi(\bz,t)  \bm{1}^{\top}\left(\bm{D}(\bz) \odot {\rm{Hess}}(\sigma^{{\rm{opt}}}) \right) \bm{1}  \differential \bz
\label{ChainOfEqualitites}
\end{align}
we deduce that (\ref{term3}) is equal to 
\begin{equation}
    -\epsilon\int_{0}^{1} \int_{\mathcal{Z}} \langle \bm{D}(\bz), {\rm{Hess}}(\psi)  \rangle \sigma^{{\rm{opt}}}(\bz,t)  \differential \bz \differential t.
\end{equation}
So, the expression inside the curly braces in (\ref{Lagrangian2}), now will have an additional term $-\epsilon \langle \bm{D}(\bz), {\rm{Hess}}(\psi) \rangle$ that is independent of $\bv$. Therefore, pointwise  minimization of (\ref{Lagrangian2}) with this additional term
w.r.t. $\bv$, gives (\ref{optcntrl}), and the associated HJB PDE becomes (\ref{StochasticHJBPDE}). 
\end{proof}
Next, we show that the so-called Hopf-Cole transform \cite{hopf1950partial, cole1951quasi} allows to reduce the system of \emph{nonlinear} PDEs (\ref{StochasticHJBPDE})-(\ref{FPKPDE}) with boundary conditions (\ref{SBPBC}), into a system of boundary-coupled \emph{linear} PDEs, which we refer as the ``Schr\"{o}dinger System".

 \begin{theorem}\label{ThmSchrodingerSystem}
 (\textbf{Schr\"{o}dinger System}) Consider the Hopf-Cole transformation $(\sigma^{{\rm{opt}}},\psi) \mapsto (\varphi,\hat{\varphi})$: 
 \begin{subequations} \label{HopfCole}
     \begin{align}
     \varphi(\bz,t)&:= \exp(\psi(\bz,t) /2\epsilon), \label{HopfCole1}\\ 
     \hat{\varphi}(\bz,t) &:= \sigma^{{\rm{opt}}}(\bz,t) \exp(-\psi(\bz,t)/2\epsilon), \label{HopfCole2} 
     \end{align}
 \end{subequations}
 applied to the system of coupled nonlinear PDEs (\ref{StochasticHJBPDE})-(\ref{FPKPDE}). The pair $(\varphi,\hat{\varphi})$ satisfies the following system of linear PDEs:
 \begin{subequations} \label{ForwardBackward}
 \begin{align}
     \frac{\partial \varphi }{\partial t} &+ \langle \nabla_{\bz} \varphi, \bA \bz - \bB \bgamma_{\btau}^{-1} \bdelta_{\btau}(\bz) \rangle +  \epsilon \langle \bm{D},{{\rm{Hess}}}(\varphi)\rangle = 0,  \label{BackwardPDE} \\ 
\frac{\partial \hat{\varphi}}{\partial t} &+ \nabla_{\bz} \cdot \left( \left( \bA \bz - \bB \bgamma_{\btau}^{-1} \bdelta_{\btau}(\bz) \right)\hat{\varphi} \right) \notag \\
& \qquad \qquad -\epsilon \bm{1}\left( \bm{D}(\bz) \odot {\rm{Hess}}(\hat{\varphi})\right) \bm{1}=0, \label{ForwardPDE}
 \end{align}
 \end{subequations} 
with coupled boundary conditions 
\begin{equation}\label{CoupledBC}
    \varphi_0(\bz)\hat{\varphi}_0(\bz) = \sigma^{{\rm{opt}}}_0(\bz),\quad
     \varphi_1(\bz)\hat{\varphi}_1(\bz) = \sigma^{{\rm{opt}}}_1(\bz).
\end{equation}
\end{theorem}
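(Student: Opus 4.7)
My plan is to substitute the Hopf-Cole relations (\ref{HopfCole1})-(\ref{HopfCole2}) directly into the coupled nonlinear system (\ref{StochasticHJBPDE})-(\ref{FPKPDE}) and show that the nonlinearities cancel in a structured way. Two useful consequences I will exploit throughout are (i) $\psi = 2\epsilon \log \varphi$, obtained by inverting (\ref{HopfCole1}), and (ii) $\sigma^{{\rm{opt}}} = \varphi \hat{\varphi}$, obtained by multiplying (\ref{HopfCole1}) and (\ref{HopfCole2}). I will handle the backward PDE (\ref{BackwardPDE}) first because it reduces to a one-variable calculation, and then leverage it as a tool to derive the forward PDE (\ref{ForwardPDE}). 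Finally, the coupled boundary conditions (\ref{CoupledBC}) are immediate from $\sigma^{{\rm{opt}}}(\bz,t) = \varphi(\bz,t)\hat{\varphi}(\bz,t)$ evaluated at $t=0,1$ using (\ref{SBPBC}).

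For the backward PDE, the key computation is to express $\partial_t \psi$, $\nabla_{\bz}\psi$, and ${\rm{Hess}}(\psi)$ in terms of derivatives of $\varphi$ via logarithmic differentiation. One obtains $\nabla_{\bz}\psi = (2\epsilon/\varphi)\nabla_{\bz}\varphi$ and ${\rm{Hess}}(\psi)_{ij} = (2\epsilon/\varphi)\partial_{ij}\varphi - (2\epsilon/\varphi^{2})\partial_{i}\varphi\,\partial_{j}\varphi$. Plugging these into the two quadratic terms of (\ref{StochasticHJBPDE}) gives
\begin{equation*}
\tfrac{1}{2}\langle \nabla_{\bz}\psi,\bD\nabla_{\bz}\psi\rangle + \epsilon\langle \bD,{\rm{Hess}}(\psi)\rangle = \tfrac{2\epsilon^{2}}{\varphi}\langle \bD,{\rm{Hess}}(\varphi)\rangle,
\end{equation*}
since the gradient-quadratic contribution $(2\epsilon^{2}/\varphi^{2})\langle \nabla_{\bz}\varphi, \bD\nabla_{\bz}\varphi\rangle$ appears with equal and opposite sign in the two pieces. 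Substituting this and $\partial_{t}\psi = (2\epsilon/\varphi)\partial_{t}\varphi$ back into (\ref{StochasticHJBPDE}), then multiplying by $\varphi/(2\epsilon)$, yields the linear PDE (\ref{BackwardPDE}).

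For the forward PDE, I will use the identity $\bD = \bB(\bgamma_{\btau}^{\top}\bgamma_{\btau})^{-1}\bB^{\top}$ together with (\ref{optcntrl}) to write the optimal drift as $\bA\bz + \bB\bv^{{\rm{opt}}} = (\bA\bz - \bB\bgamma_{\btau}^{-1}\bdelta_{\btau}) + \bD\nabla_{\bz}\psi$, and then rewrite $\bD\nabla_{\bz}\psi\cdot\sigma^{{\rm{opt}}} = 2\epsilon\,\hat{\varphi}\,\bD\nabla_{\bz}\varphi$ via $\nabla_{\bz}\psi = 2\epsilon\nabla_{\bz}\varphi/\varphi$. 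Expanding (\ref{FPKPDE}) under $\sigma^{{\rm{opt}}}=\varphi\hat{\varphi}$ using the product rule on $\partial_{t}(\varphi\hat{\varphi})$, on the divergence, and on ${\rm{Hess}}(\varphi\hat{\varphi})$, I then substitute $\partial_{t}\varphi$ from the already-established backward PDE (\ref{BackwardPDE}). The terms proportional to $\hat{\varphi}\langle \nabla_{\bz}\varphi,\bA\bz-\bB\bgamma_{\btau}^{-1}\bdelta_{\btau}\rangle$ and to $\hat{\varphi}\langle \bD,{\rm{Hess}}(\varphi)\rangle$ cancel, and the symmetric mixed terms $\langle \nabla_{\bz}\varphi,\bD\nabla_{\bz}\hat{\varphi}\rangle$ produced by the drift and by the Hessian expansion cancel against each other (using the symmetry of $\bD$). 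Dividing the surviving equation by $\varphi>0$ delivers (\ref{ForwardPDE}). The main obstacle I anticipate is precisely this bookkeeping step: unlike the backward case, the cancellations in the forward derivation are not one-line and rely sensitively on the symmetry of $\bD$, on the rewriting of the drift via $\bD\nabla_{\bz}\psi$, and on a consistent interpretation of the notation $\bm{1}^{\top}(\bD\odot{\rm{Hess}}(\cdot))\bm{1}$ so that the adjoint relationship used in (\ref{ChainOfEqualitites}) for state-dependent $\bD(\bz)$ is applied uniformly on both sides.
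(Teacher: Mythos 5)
Your proposal is correct, and the backward half is verbatim the paper's argument: logarithmic differentiation of $\psi=2\epsilon\log\varphi$, cancellation of the gradient-quadratic terms, and multiplication by $\varphi/2\epsilon$ to get (\ref{BackwardPDE}). Where you differ is in the forward half. The paper differentiates $\hat{\varphi}=\sigma^{{\rm{opt}}}\exp(-\psi/2\epsilon)$ directly in $t$ and substitutes \emph{both} nonlinear PDEs (\ref{StochasticHJBPDE}) and (\ref{FPKPDE}) for $\partial_t\psi$ and $\partial_t\sigma^{{\rm{opt}}}$, using the auxiliary identities (\ref{Intermed1})--(\ref{Intermed2}) to repackage the result as (\ref{ForwardPDE}); you instead write $\sigma^{{\rm{opt}}}=\varphi\hat{\varphi}$, make the drift decomposition $\bA\bz+\bB\bv^{{\rm{opt}}}=\bigl(\bA\bz-\bB\bgamma_{\btau}^{-1}\bdelta_{\btau}\bigr)+\bm{D}\nabla_{\bz}\psi$ explicit (the paper leaves it implicit), expand (\ref{FPKPDE}) by the product rule, and eliminate $\partial_t\varphi$ via the already-proved linear equation (\ref{BackwardPDE}). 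The surviving terms then factor as $\hat{\varphi}\cdot[\text{backward PDE}]+\varphi\cdot[\text{forward PDE}]=0$, so dividing by $\varphi>0$ gives (\ref{ForwardPDE}); this buys you a derivation that never revisits the HJB nonlinearity and keeps the bookkeeping to symmetry of $\bm{D}$ and two pairwise cancellations, at the cost of being slightly less self-contained than the paper's per-equation verification. Your closing caveat is substantive and needed: for state-dependent $\bm{D}(\bz)$ the notation $\bm{1}^{\top}(\bm{D}\odot{\rm{Hess}}(\cdot))\bm{1}$ must be read in divergence form $\sum_{i,j}\partial_{z_i}\partial_{z_j}(\bm{D}_{ij}\,\cdot)$, as is implicitly done in (\ref{ChainOfEqualitites}); under the literal reading $\sum_{i,j}\bm{D}_{ij}\partial_{z_i}\partial_{z_j}(\cdot)$ extra first-order terms involving $\partial_{z_i}\bm{D}_{ij}$ survive and neither your cancellation nor the paper's adjoint step goes through, so state this interpretation explicitly when you write the argument up.
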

\begin{proof}
From (\ref{HopfCole1}), $\psi = 2\epsilon \log \varphi$, which yields 
\begin{equation}
    \frac{\partial \psi }{\partial t} = \frac{ 2\epsilon }{\varphi}\frac{\partial \varphi}{\partial t}, \quad \nabla_{\bz}  \psi = \frac{2 \epsilon}{\varphi }\nabla_{\bz}  \varphi. 
\label{HopeCole1PartialDerivatives1}
\end{equation}
On the other hand, notice that  
\begin{align}   
&\quad\,\epsilon \langle \bm{D}(\bz) , {\rm{Hess}}(\psi) \rangle \nonumber\\
&=   \epsilon \langle \bm{D}(\bz) , {\rm{Hess}}(2\epsilon \log \varphi )\rangle \nonumber\\
&= 2\epsilon^2 \sum_{i,j=1}^{n} \bm{D}_{ij}(\bz) \frac{ \partial^{2} \log \varphi}{\partial z_i \partial z_j} \nonumber\\
&= 2\epsilon^2 \sum_{i,j=1}^{n} \bm{D}_{ij}(\bz) \left( \frac{1}{\varphi} \frac{\partial^2 \varphi }{\partial z_i z_j} - \frac{1}{\varphi^2} \frac{\partial \varphi }{\partial z_i} \frac{\partial \varphi }{\partial z_j}\right) \nonumber\\
&= \frac{2\epsilon^2}{\varphi} \langle \bm{D}(\bz), {\rm{Hess}}(\varphi) \rangle  - \frac{2 \epsilon^2}{\varphi^2} \langle \nabla_{\bz}  \varphi,\bm{D}(\bz) \nabla_{\bz}   \varphi \rangle.
\label{HopeCole1PartialDerivatives2}	
\end{align}
Substituting (\ref{HopeCole1PartialDerivatives1}) and (\ref{HopeCole1PartialDerivatives2}) into (\ref{StochasticHJBPDE}) yields 
\begin{align*}
    &\frac{ 2\epsilon }{\varphi}\frac{\partial \varphi}{\partial t} + \frac{2 \epsilon}{\varphi }\langle \nabla_{\bz}  \varphi, \bA \bz- \bB \bgamma_{\btau}^{-1}(\bz) \bdelta_{\btau}(\bz) \rangle   \nonumber\\ 
    & +\frac{1}{2} \frac{4\epsilon^2}{\varphi^2} \langle \nabla_{\bz} \varphi, \bm{D}(\bz) \nabla_{\bz} \varphi \rangle +\frac{2\epsilon^2}{\varphi} \langle \bm{D}(\bz), {\rm{Hess}}(\varphi) \rangle  \nonumber\\
    &-\frac{2 \epsilon^2}{\varphi^2} \langle \nabla_{\bz} \varphi,\bm{D}(\bz) \nabla  \varphi \rangle = 0,
\end{align*}
which gives (\ref{BackwardPDE}).

Next, let $\bm{\omega}(\bz) := \bA\bz-\bB\bgamma_{\tau}^{-1}(\bz) \bdelta_{\tau}(\bz)$. We then have
\begin{align}
    &\nabla_{\bz} \cdot (\hat{\varphi}\bm{\omega}(\bz)) = \langle \nabla_{\bz} \hat{\varphi},\bm{\omega}(\bz)  \rangle + \hat{\varphi} \nabla_{\bz} \cdot \bm{\omega}(\bz) \nonumber\\
    &=  \exp\left( -\psi / 2 \epsilon \right) \bigg( \langle \nabla \sopt , \bm{\omega}(\bz) \rangle + \sopt \nabla_{\bz} \cdot \left( \bm{\omega}(\bz) \right)\nonumber\\
    &- \frac{\sopt}{2\epsilon} \langle \nabla_{\bz} \psi,  \bm{\omega}(\bz) \rangle \bigg ),
 \label{Intermed1}    
\end{align}
and 
\begin{align}
   &\epsilon \bm{1}^{\top} (\bm{D} \odot {\rm{Hess}}(\hat{\varphi})) \bm{1} =  \exp\left(-\frac{\psi}{2 \epsilon}\!\right)\!\bigg(\!\epsilon \bm{1}^{\top} (\bm{D} \odot {\rm{Hess}}(\hat{\sopt})) \bm{1} \nonumber\\
   &-  \frac{\partial (\bm{D}_{ij}(\bz) \sopt )}{\partial x_j} \frac{\partial \psi}{\partial z_j} -\frac{1}{2} \sopt \langle \bm{D}(\bz), {\rm{Hess}}(\psi) \rangle  \nonumber\\
   &+\frac{\sopt}{4\epsilon} \langle \nabla_{\bz} \psi , \bm{D}(\bz) \nabla_{\bz} \psi \rangle \bigg).
   \label{Intermed2} 
    \end{align}
In (\ref{HopfCole2}), taking the partial derivative of $\hat{\varphi}$ w.r.t. $t$, and using (\ref{StochasticHJBPDE})-(\ref{FPKPDE}) together with (\ref{Intermed1})-(\ref{Intermed2}), we get 
\begin{align*}
    &\frac{\partial \hat{\varphi}}{\partial t} =  \exp\left( -\psi / 2 \epsilon \right)\bigg(\frac{\partial \sopt }{\partial t}- \frac{\sopt}{2\epsilon} \frac{\partial \psi}{\partial t}\bigg) \\
    & =  \exp\left( -\psi / 2 \epsilon \right)\bigg( -\nabla_{\bz}(\sopt \bw(\bz))  -\nabla_{\bz}(\sopt \bm{D} \nabla \psi) \\
    &+ \epsilon \bm{1}( \bm{D}(\bz) \odot {\rm{Hess}}(\sopt)) \bm{1} + \frac{\sopt}{2\epsilon}\langle \nabla_{\bz} \psi, \bw(\bz) \rangle  \\
    &+\frac{\sopt}{4\epsilon} \langle \nabla_{\bz} \psi, \bm{D}(\bz) \nabla_{\bz}\psi \rangle  +\frac{\sopt}{2} \langle \bm{D}(\bz) , {\rm{Hess}}(\psi) \rangle \bigg)\\
    &= -\nabla_{\bz}\cdot (\hat{\varphi} \bw(\bz)) + \epsilon \bm{1}^{\top} (\bm{D} \odot {\rm{Hess}}(\hat{\varphi})) \bm{1},
\end{align*}
which is indeed (\ref{ForwardPDE}). The boundary conditions (\ref{SBPBC}) follow directly from (\ref{HopfCole1})-(\ref{HopfCole2}).
\end{proof}

Theorem \ref{ThmSchrodingerSystem} in principle allows solving problem (\ref{FluidsVersionStochasticFL}) in the following manner. Let $(\varphi_{1},\hat{\varphi}_{0}) := \left(\varphi\left(\bz,t=1\right),\hat{\varphi}\left(\bz,t=0\right)\right)$ denote the terminal-initial condition pair for the system (\ref{BackwardPDE})-(\ref{ForwardPDE}). By making an arbitrary guess for the pair $(\varphi_{1},\hat{\varphi}_{0})$, one can perform a fixed point recursion on the Schr\"{o}dinger system (\ref{ForwardBackward})-(\ref{CoupledBC}), and the converged pair $(\varphi_{1},\hat{\varphi}_{0})$ can then be used to compute the transient pair $\left(\varphi\left(\bz,t\right),\hat{\varphi}\left(\bz,t\right)\right)$. Then, by (\ref{HopfCole}), we recover $(\sopt,\psi)$, and thus $(\sopt,\vopt)$ from (\ref{optcntrl}). Notice that this procedure with small $\epsilon>0$ will yield the pair $(\sopt,\vopt)$ solving problem (\ref{FluidsVersionFL}). Finally, the mapping (\ref{OriginalRecovery}) in Remark \ref{RemarkBack2Original} recovers the solution $(\ropt,\uopt)$ for problem (\ref{FluidDynamicsVersion}). This algorithmic framework and its convergence will be the topic of our future research.

\section*{Conclusions}
We considered the minimum energy joint state PDF steering problem over finite time horizon subject to the multi-input state feedback linearizable dynamics. We showed that the density steering problem can be made amenable in the feedback linearized coordinates. We derived the state feedback controller in terms of the solutions of a pair of coupled HJB and Fokker-Planck-Kolmogorov PDEs. Furthermore, we reduced this system of coupled nonlinear PDEs to a system of boundary-coupled linear PDEs. Our results are expected to lay the foundation for developing computational algorithms solving the density steering problem.



\bibliographystyle{IEEEtran}
\bibliography{referencesMIMOacc2020.bib}

\end{document}